\documentclass[12pt,amscd]{amsart}

\newtheorem{thm}{Theorem}[section]
\usepackage[all]{xy}
\usepackage{graphicx}
\usepackage{amsmath,amsxtra,amssymb,latexsym, amscd,amsthm}
\usepackage{indentfirst}
\usepackage[mathscr]{eucal}
\usepackage{hyperref}
\hypersetup{colorlinks=true,linkcolor=red,filecolor=magenta,urlcolor=green}

\newtheorem{lem}[thm]{Lemma}

\theoremstyle{definition}
\newtheorem{defn}[thm]{Definition}
\newtheorem{exam}[thm]{Example} 
\newtheorem{rem}[thm]{Remark}

\DeclareMathOperator{\Z}{\mathbb {Z}}

\DeclareMathOperator{\depth}{depth}

\DeclareMathOperator{\reg}{reg}
\DeclareMathOperator{\pd}{pd}
\DeclareMathOperator{\dst}{dstab}

\def\x {\mathbf x}

\def\mi {\mathfrak m}

\begin{document}

\title[The Depth  function of path graphs]{The depth  function of powers of cover ideals of path graphs}

\author[T. D.  Dung]{Tran Duc Dung}
	\address{TNU-Thai Nguyen University of Sciences, Phan Dinh phung Ward, Thai Nguyen, Vietnam}
	\email{dungtd@tnus.edu.vn}

\author[N.T. Hang]{Nguyen Thu Hang }
\address{TNU-Thai Nguyen University of Sciences, Phan Dinh phung Ward, Thai Nguyen, Vietnam}
\email{hangnt@tnus.edu.vn}

\author[P. H. Nam]{Pham Hong Nam}
\address{TNU-Thai Nguyen University of Sciences, Phan Dinh phung Ward, Thai Nguyen, Vietnam}
\email{namph@tnus.edu.vn}

\author[N. T. T. Tam]{Nguyen Thi Thanh Tam}
\address{Hung Vuong University, Phu Tho, Vietnam}
\email{nguyenthithanhtam@hvu.edu.vn}

\subjclass{13A15, 13C15, 05C90, 13D45.}
\keywords{Depth function, Cover ideal, Induced  matching, Ordered matching, Path}

\begin{abstract} Let $G=P_n$ be a path graph with cover ideal $J(P_n)$. By using Hochster's depth formula, we prove the explicit formulae to compute the depth functions of powers of cover ideals of paths.
\end{abstract}

\maketitle
\section{Introduction}

Let $R= K[x_1,\ldots,x_n]$ be a polynomial ring over a field $K$ and  $I$ the  homogeneous ideal of $R$. It is a classical result of Brodmann that the sequence Brodmann $\depth R/I^t$ of non-negative integers stabilizes for sufficiently large $t$ i.e., it eventually becomes constant as $t\rightarrow \infty$ \cite{Br}. Understanding the asymptotic behavior of this sequence, as well as determining its initial values for specific classes of ideals, has become an active area of research at the interface of commutative algebra and combinatorics.
In this direction, Ha, Nguyen, Trung, and Trung \cite{HHTT}  proved that the depth functions of powers of monomial ideals can realize any non-negative integer-valued convergent function. The smallest positive natural number $k$ such that 
$$\depth R/I^m={\rm lim}_{t\rightarrow \infty}\depth R/I^t \text{ for all } m\geqslant k$$
 is called the {\it index of depth stability of powers} of $I$ and is denoted by $\dst(I) $. Despite intensive study, even for general monomial ideals the best known bounds for $\dst(I)$ are exponential in the number of variables and the maximal degree of generators \cite{HT1, HT2}. \\ 
For special monomial ideals arising from graphs, deeper results are known. In particular, when $I$ is the edge ideal of a graph or the cover ideal of a graph, explicit formulas for $\dst(I)$  have been established by Lam, Trung, Trung \cite{LTT},  and Binh, Hang, Hien, Trung \cite{BHHT}.  Moreover, depth and symbolic depth of both edge ideals and cover ideals have been studied extensively in recent literature, including bounds and exact formulas in terms of combinatorial invariants such as matching numbers,  induced matching numbers, and ordered matching numbers.

While the index $\dst(I)$ has been studied broadly, computing explicit values of $\depth R/I^t$ for small powers $t$ remains difficult. Until now, only a limited number of results are known for edge ideals of specific graph families: paths \cite{BC}, cycles and trees \cite{MTV}, Cohen-Macaulay tree graphs \cite{HHT}, etc. In contrast, there are relatively few results giving explicit formulas for $\depth R/J^t$, where $J=J(G)$ is the cover ideal of a particular simple graph.

In this paper,  we investigate the depth functions and depth stability indices of square-free monomial ideals of height two, which coincide with cover ideals of graphs.  Let $G=(V,E)$ be a graph with vertex set $V=\{1,\ldots,n\}$. For a subset $\tau$ of $V$, denote the square-free monomial $\x_{\tau}$ to be the product of all variables $x_i$ where $i\in\tau$. Then, the {\it cover ideal} of $G$ is defined by:
$$J(G) = (\x_{\tau} \mid \tau \text{ is a minimal vertex cover of } G).$$

Our first main result investigates the behavior of $\depth R/J(P_n)^t$ for even paths, establishing a closed formula in terms of $n$ and $t$. More precisely, for $n = 2k$, we prove  that

{\bf Theorem 3.3.} {\it Let $P_n$  be an even path with  $n=2k$ and $k\geqslant 1$.  Then
 $$\depth R/J(P_{2k})^t = \begin{cases}
 k-1+ \left\lfloor  \dfrac{k+t}{2t+1}\right\rfloor & \text{ for all }  1\leqslant t\leqslant k -1,\\
 k-1  & \text{ for all } t\geqslant  k.
 \end{cases}$$}
 
 The next purpose of this paper is to complete the picture for path graphs by treating the case of odd paths. Our the second main result provides explicit formulas for $\depth R/J(P_n)^t$ when $n$ is odd,

 {\bf Theorem 3.4.} {\it Let $P_n$  be an odd path with cover ideal $J(P_n)$. 
\begin{itemize}
\item[(i)] If  $n=4k+3$ with $k\geqslant 0$, then
$$\depth R/J(P_{4k+3})^t =
 \begin{cases}
(2k+1)+ \left\lfloor\dfrac{2k+1}{2t+1}\right\rfloor& \text{ for all }  1\leqslant t\leqslant k,\\
 2k+1  & \text{ for all } t\geqslant  k+1.
 \end{cases}$$
  \item[(ii)]  If  $n=4k+1$ with $k\geqslant 1$, then
 \begin{equation*}
\depth R/J(P_{4k+1})^t=
 \begin{cases}
2k+ \left\lfloor \dfrac{2k}{2t+1}\right\rfloor &\text{  for all  }1\leqslant t\leqslant k -1,\\
2k  &\text{  for all  }t\geqslant  k.
 \end{cases}
 \end{equation*}
 \end{itemize}}

The proof strategy relies on Hochster's depth formula \cite{HOC}. For a monomial ideal $I$, the set of associated radical ideals ${\rm assrad(I)}$ consists of radicals $\sqrt{I:f}$ as $f$ ranges over monomials not in $I$. Then, Hochster's formula states that.
$$\depth R/I=\min \{\depth R/Q |\ Q \text{ is an associated radical of }I \}.
$$
Applied to $I = J(P_n)^t$, this reduces the computation to determining depths of certain radical ideals of the form $\bigcap_{e_i\in S}(x_i,x_{i+1})$, where $S \subseteq E(P_n)$ is a set of edges in $P_n$ arising from a monomial $f \notin J(P_n)^t$. Geometrically, each such radical ideal corresponds to a forest $G_S$ whose edges are exactly $e_i=\{x_i,x_{i+1}\}$ for $e_i\in S$. For a forest, the depth can be expressed in terms of its induced matching number $\nu'(G_S)$ via the formula $\depth R/J(G_S) = n - \nu'(G_S) - 1$. The key combinatorial insight is that the condition $f \notin J(P_n)^t$ imposes constraints on the structure of $S$, which in turn limits how large $\nu'(G_S)$ can be. By constructing optimal configurations using a block decomposition of the path into segments of length $2t+1$, we show that the maximum possible induced matching number of $G_S.$ By the careful algebraic manipulation, the explicit formulas stated in our paper.

 \rm The paper is organized as follows. Section 2 collects necessary preliminaries on graphs, the depth function, Hochster’s depth formula, and induced matchings of certain graphs. In Section 3, we present the detailed computation of depth for even and odd paths.

\section{Preliminary}

\rm In this section, we recall notation, terminology and basic results used in the paper. Throughout the paper, let $K$ be a field, and $R = K[x_1,\ldots,x_n]$, with $n\geqslant 2$ be a polynomial ring, and let $\mi = (x_1,\ldots,x_n)$ be the {\it maximal homogeneous} ideal of $R$. 

 \subsection{Depth and Caselnuovo Mumford regularity} The main object of our work is the depth of graded modules and ideals over $R$. This invariant can be defined via either the minimal free resolutions or the local cohomology modules. 

Let $L$ be a nonzero finitely generated graded  $R$-module and let
$$0 \rightarrow \bigoplus_{j\in\Z} R(-j)^{\beta_{p,j}(L)} \rightarrow \cdots \rightarrow \bigoplus_{j\in\Z}R(-j)^{\beta_{0,j}(L)}\rightarrow 0$$
be the minimal free resolution of $L$. The {\it projective dimension} of $L$ is the length of this resolution
$$\pd(L) = p,$$
and the {\it depth} of $L$ is given by Auslander-Buchsbaum formula
\begin{equation}
\depth(L) =n- p.
\end{equation}

Another invariant measures the complexity of the resolution is the \emph{Castelnuovo–Mumford regularity} (or regularity for short) of $L$ which is defined by
$$\reg(L) = \max\{j-i\mid \beta_{i,j}(L)\ne 0\}.$$

The depth and regularity of $L$ can also be computed via the local cohomology modules of $L$. Let $H_{\mi}^i(L)$ be the $i$-th cohomology module of $L$ with support in $\mi$. Then,
$$\depth(L) = \min\{i\mid H_{\mi}^i(L) \ne 0\},$$
and
$$\reg(L) = \max\{j + i\mid H_{\mi}^i(L)_j \ne 0, \text{ for } i = 0,\ldots, \dim(L), \text{ and } j\in \Z\}.$$

 Let $I$ be a monomial ideal in $R$, from \cite{HOC} we recall the concept {\it associated radical ideal} of $I$, as follows.
\begin{defn} Let $I$ be a monomial ideal in $R$ and $u$ be a monomial, which is not in $I$. The radical ideal $Q:=\sqrt{I:u}$ is called an associated radical ideal of $I$. We denote the set of all {\it associated radical} ideals of I by ${\rm assrad(I)}.$
\end{defn} 

\begin{rem}\label{associatedradical} Let $I$ be a monomial ideal in $R$. Then $I$ admits a unique standard primary decomposition
  $$I=\bigcap_{i=1}^r Q_i,$$ 
  where each $Q_i$ is a $P_i$-primary monomial ideal. Given such decomposition of $I$, for  each monomial $u\in R$, we have 
  $$\sqrt{I:u}=\underset{u\notin Q_i}\bigcap P_i.$$
   In fact, each associated radical ideal of $I$ is an intersection of some associated prime ideals of $I$. Moreover, because $\sqrt{I}=\sqrt{I:1}$, it follows that all associated prime ideals $P_i$ belong to ${\rm assrad(I)}.$
\end{rem}

\begin{exam} Let $I=(x_1,x_2^2)\cap (x_2,x_3^3)\cap (x_1,x_3)$ and let $P_1=(x_1,x_2), P_2=(x_2,x_3), P_3=(x_1,x_3)$. Then ${\rm assrad(I)}=\{P_1, P_2, P_3, P_1\cap P_2, P_2\cap P_3, P_1\cap P_2\cap P_3\}.$

\end{exam}

We employ the set of associated radical ideals in order to investigate the depth of $R/I$. The significance of this set is highlighted by the {\it Hochster's depth formula} given in \cite{HOC}, which states that:
\begin{equation}\label{Hochsterformula}
\depth R/I=\min \{\depth R/Q |\ Q \text{ is an associated radical of }I \}.
\end{equation}
 This formula provides a powerful tool for estimating or computing the depth of a monomial ideal via the depths of its associated radical ideals.

\subsection{Graphs}

Let $G$ be a simple graph. We use the symbols $V(G)$ and $E(G)$ to denote the vertex set and the edge set of $G$, respectively. In this paper, we always assume that $E(G)\ne \emptyset$ unless otherwise indicated.

A graph $H$ is called a {\it subgraph} of $G$ if $V(H)\subseteq V(G)$ and $E(H) \subseteq E(G)$.  A graph $H$ is called an {\it induced subgraph} of $G$ if the vertices of $H$ are vertices in $G$, and for vertices $u$ and $v$ in $V(H)$, $\{u,v\}$ is an edge of $H$ if and only if $\{u,v\}$ is an edge in $G$. The induced subgraph of $G$ on a subset $S \subseteq V(G)$, denoted by $G[S]$, is obtained by deleting vertices not in $S$ from $G$ (and their incident edges).

Let $P\colon ,v_1,\ldots, v_k$ be a sequence of vertices of $G$. Then,
\begin{enumerate}
\item $P$ is called a {\it walk} if $\{v_{i},v_{i+1}\}\in E(G)$ for $i=1, \ldots,k$. In this case, we say that $p$ is a walk from $v_1$ to $v_k$.
\item $P$ is called a {\it path} if it is a walk and every vertex appears exactly once.
\end{enumerate}
In each case, denote $k=\text{length}(P)$ which is called the {\it length} of $P$. 

A graph $G$ with $n$ vertices such that all edges lying on a path is called a path with $n$ vertices, denoted by $P_n$.

A  graph is {\it connected} if there is a path from any vertex to any other vertex in the graph. A graph that is not connected is said to be {\it disconnected}. A connected component of a graph $G$ is a connected subgraph that is not part of any larger connected subgraph. A connected graph without cycles is a {\it tree}. A graph is a {\it forest} if every its connected component is a tree.

The graph $G$ is {\it bipartite} if  $V(G)$ can be partitioned into two subsets $X$ and $Y$ such that every edge has one end in $X$ and another end in $Y$; such a partition $(X,Y)$ is called a {\it bipartition} of the graph. Note that $G$ is bipartite if and only if it has no cycle of odd length (see \cite[Theorem 4.7]{BM}).  So, any path $P_n$ is bipartite.

A {\it matching} in the graph $G$ is a set of pairwise non adjacent edges. If $M$ is a matching, the two ends of each edge of $M$ are said to be matched under $M$, and each vertex incident with an edge of $M$ is said to be covered by $M$.  The number of edges in a maximum matching in a graph $G$ is called the {\it matching number} of $G$ and denoted $\nu(G)$.

A matching $M$ of $G$ is called an {\it induced matching} if the graph $G[M]$ is just disjoint edges. The {\it induced matching number} of $G$, denoted by $\nu'(G)$, is the maximum size of an induced matching in $G$.

\begin{lem}\label{inducedmatching} {\rm \cite[Remark 2.12]{BHT} } Let  $P_n$ be a path. Then  $\nu'(P_n)=\left\lfloor  \dfrac{n+1}{3}\right\rfloor$.
\end{lem}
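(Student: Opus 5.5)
We prove the formula $\nu'(P_n)=\lfloor (n+1)/3\rfloor$ by induction on $n$, using a greedy choice of edges at one end of the path. Label the vertices $1,\dots,n$ and the edges $e_i=\{i,i+1\}$ for $1\leqslant i\leqslant n-1$. A set of edges $\{e_{i_1},\dots,e_{i_m}\}$ with $i_1<\cdots<i_m$ is an induced matching exactly when $i_{j+1}-i_j\geqslant 3$ for all $j$. Indeed, the two edges must be disjoint, which forces a difference of at least $2$. If the difference is exactly $2$, say the edges are $e_i$ and $e_{i+2}$, then the edge $e_{i+1}=\{i+1,i+2\}$ joins them in the induced subgraph, so the matching is not induced. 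If the difference is at least $3$, no edge of $P_n$ joins the two edges.

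This reduces the problem to counting. We need the largest number of integers in $\{1,\dots,n-1\}$ with pairwise gaps at least $3$.

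For the lower bound, take the indices $1,4,7,\dots,3m-2$, where $m$ is as large as possible with $3m-2\leqslant n-1$. That condition is $m\leqslant (n+1)/3$, so we can take $m=\lfloor (n+1)/3\rfloor$.

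For the upper bound, let $i_1<\cdots<i_m$ be such a set of indices. Then $i_m\geqslant i_1+3(m-1)\geqslant 3m-2$. Since also $i_m\leqslant n-1$, we get $3m-2\leqslant n-1$, hence $m\leqslant\lfloor (n+1)/3\rfloor$.

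Equivalently, one can induct on $n$. The small cases $n=2,3,4$ give $\nu'=1$ directly. For the inductive step, some maximum induced matching can be shifted so that it contains $e_1$. Removing vertices $1,2,3$ then leaves a copy of $P_{n-3}$, which gives the recursion $\nu'(P_n)=1+\nu'(P_{n-3})$.

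The only step that needs care is the case of difference exactly $2$: one must check that the connecting edge $e_{i+1}$ really appears in the induced subgraph. The rest is arithmetic.
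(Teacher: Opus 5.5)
Your proof is correct. The paper itself gives no argument for this lemma; it simply quotes \cite[Remark 2.12]{BHT}, where the formula is treated as a routine observation, so there is no internal proof to compare against.

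Your route supplies a complete, self-contained justification of what the paper takes from the literature, and it costs only a few lines. You first characterize the induced matchings of $P_n$ as the edge sets $\{e_{i_1},\dots,e_{i_m}\}$ whose consecutive index gaps satisfy $i_{j+1}-i_j\geqslant 3$. You then attain the bound with $e_1,e_4,\dots,e_{3m-2}$, and you prove optimality from $3m-2\leqslant i_m\leqslant n-1$.

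The inductive paragraph is redundant given the counting argument, but it is also sound. The shift works because replacing $e_{i_1}$ by $e_1$ only enlarges the first gap. Once $e_1$ is used, no other edge of an induced matching can cover vertex $3$: the edge $e_2$ meets $e_1$, and $e_3$ is joined to $e_1$ by $e_2$. Hence the remaining edges lie in the path on $\{4,\dots,n\}$. This gives $1+\lfloor (n-2)/3\rfloor=\lfloor (n+1)/3\rfloor$ for $n\geqslant 5$, with $n=2,3,4$ as base cases.
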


An {\it independent set} in $G$ is a set of vertices no two of which are adjacent to each other.  According to Constantinescu and Varbaro \cite{CV}, we define an ordered matching as follows.

\begin{defn}\label{ordered-matching} A matching $M=\{\{u_i,v_i\} \mid i=1,\ldots,s\}$ in a graph $G$ is called an {\it ordered matching} if:
\begin{enumerate}
\item $\{u_1,\ldots,u_s\}$ is an independent set in $G$,
\item $\{u_i, v_j\} \in E(G)$ implies $i \leqslant j$.
\end{enumerate}

The {\it ordered matching number} of $G$, denoted by $\nu_0(G)$ is the maximum size of an ordered matching in $G$. 
\end{defn}

\begin{lem}\label{orderedmat} {\rm \cite[Proposition 3.5]{BHHT} } Let  $P_n$ be a path. Then  $\nu_0(P_n)=\left\lfloor  \dfrac{n}{2}\right\rfloor$.
\end{lem}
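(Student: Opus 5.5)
The plan is to prove the two inequalities $\nu_0(P_n)\leqslant \lfloor n/2\rfloor$ and $\nu_0(P_n)\geqslant \lfloor n/2\rfloor$ separately. Label the vertices of $P_n$ as $1,\ldots,n$, so that $E(P_n)=\{\{i,i+1\}\mid 1\leqslant i\leqslant n-1\}$.

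\emph{Upper bound.} By Definition \ref{ordered-matching}, every ordered matching is in particular a matching. Hence $\nu_0(P_n)\leqslant \nu(P_n)$. A matching in $P_n$ consists of pairwise disjoint edges, each using two of the $n$ vertices, so $\nu(P_n)\leqslant \lfloor n/2\rfloor$. This gives $\nu_0(P_n)\leqslant\lfloor n/2\rfloor$.

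\emph{Lower bound.} Put $s=\lfloor n/2\rfloor$ and take the matching $M=\{\{2i-1,2i\}\mid i=1,\ldots,s\}$. This is a valid matching in $P_n$ because $2s\leqslant n$. The only point needing care is which endpoint of each edge we call $u_i$. With $u_i=2i-1$ and $v_i=2i$, condition (2) fails: $u_i=2i-1$ is adjacent to $v_{i-1}=2i-2$, and $i-1<i$. So we orient each edge the other way, taking
$$u_i=2i,\qquad v_i=2i-1 \qquad (i=1,\ldots,s).$$
We then check the two conditions of Definition \ref{ordered-matching}.

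\begin{enumerate}
\item The set $\{u_1,\ldots,u_s\}$ consists of even vertices. No two even vertices are adjacent in $P_n$, so this set is independent.
\item Suppose $\{u_i,v_j\}=\{2i,2j-1\}$ is an edge. Then $|2i-(2j-1)|=1$, so $2j-1=2i\pm 1$, that is, $j=i$ or $j=i+1$. In both cases $i\leqslant j$.
\end{enumerate}

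Thus $M$ is an ordered matching of size $\lfloor n/2\rfloor$, and $\nu_0(P_n)\geqslant\lfloor n/2\rfloor$. Combined with the upper bound, $\nu_0(P_n)=\lfloor n/2\rfloor$.

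There is no real obstacle in this argument. The one step that needs attention is the orientation of the edges in the lower bound: the ordering condition (2) forces each $u_i$ to be the endpoint of its edge lying on the side towards later indices.
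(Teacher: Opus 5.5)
Your proof is correct. The paper does not prove this lemma itself: it imports it from \cite[Proposition 3.5]{BHHT}, so there is no internal argument to compare yours against step by step.

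What you give is a self-contained check against Definition~\ref{ordered-matching}. The upper bound $\nu_0(P_n)\leqslant\nu(P_n)\leqslant\lfloor n/2\rfloor$ holds because every ordered matching is a matching. The lower bound comes from the explicit matching $\{\{2i-1,2i\}\mid 1\leqslant i\leqslant \lfloor n/2\rfloor\}$ with $u_i=2i$ and $v_i=2i-1$. Both conditions hold as you verify: the $u_i$ are all even, hence independent, and $\{2i,2j-1\}\in E(P_n)$ forces $j\in\{i,i+1\}$.

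Your remark about orientation is the one point of substance. The definition depends both on which endpoint is called $u_i$ and on how the edges are indexed. So you could equally keep the $u_i$ odd and list the edges in reverse order, which is the mirror image of your construction.

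Citing keeps the paper short. Your argument makes the lemma independent of \cite{BHHT} at the cost of a few lines, and it has no gaps.
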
 

\subsection{Edge Ideals and Cover Ideals} 
Let $G$ be a finite simple graph. Assume that $V(G)=\{1,\ldots,n\}$. The {\it edge ideal} of $G$ is define by
$$I(G): = (x_ix_j \mid \{i,j\} \in E(G)) \subseteq R.$$

 A {\it vertex cover} of $G$ is a subset of $V$ which meets every edge of $G$; a vertex cover is {\it minimal} if none of its proper subsets is itself a cover. The {\it cover ideal} of $G$ is defined by 
$$J(G) := (\x_{\tau} \mid \tau \text{ is a minimal vertex cover of } G).$$
It is well-known that the cover ideal $J(G)$ has the primary decomposition
\begin{equation} \label{intersect}
J(G) = \bigcap_{\{u,v\}\in E} (x_u, x_v).
\end{equation}
It follows that $J(G) = I(G)^{\vee}$.

The $s$-th symbolic power of $J(G)$ is 

\begin{equation}\label{symbolic}
J(G)^{(s)}=\underset{\{i,j\}\in E(G)}\bigcap (x_i,x_j)^s.
 \end{equation}

It is worth mentioning that the cover ideal $J(G)$ of $G$ is normally torsion-free, i.e. $J(G)^{(s)} = J(G)^s$ for all $s\geqslant 1$, if and only if $G$ is bipartite (see \cite[Theorem 5.1]{HHT1}).   In particular, $J(P_n)^{(s)} = J(P_n)^s$ for all $s\geqslant 1$.

\subsection{Depth fucntion of cover ideals of graphs} In the sequel, we need some facts about the behavior of the depth function of $J(G)$. By  \cite[Theorems 3.2 and 3.4]{HKTT}, it follows.

\begin{lem}\label{hktt} Let $G$ be a simple graph with cover ideal $J(G)$. Then,
\begin{enumerate}
\item[(i)] The sequence $\{\depth R/J(G)^{(s)}\}_{t\geqslant 1}$ is non-increasing, i.e.
$$\depth R/J(G) \geqslant \depth R/J(G)^{(2)} \geqslant \depth R/J(G)^{(3)} \geqslant \cdots$$
\item[(ii)] $\depth R/J(G)^{(s)} = n - \nu_0(G)-1$ for all $s\geqslant 2\nu_0(G)-1$.
\end{enumerate}
\end{lem}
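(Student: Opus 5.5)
For part (i), the plan is to apply Hochster's depth formula \eqref{Hochsterformula} to $J(G)^{(s)}=\bigcap_{\{i,j\}\in E(G)}(x_i,x_j)^s$ and show that $\mathrm{assrad}(J(G)^{(s)})\subseteq \mathrm{assrad}(J(G)^{(s+1)})$; the inequality then follows at once. By Remark \ref{associatedradical}, for a monomial $u=\x^{a}$ with $a\in\mathbb N^n$ we have
$$\sqrt{J(G)^{(s)}:u}=\bigcap_{\{i,j\}\in S_s(a)}(x_i,x_j),\qquad S_s(a)=\{\{i,j\}\in E(G)\mid a_i+a_j\leqslant s-1\}.$$
Here $u\notin J(G)^{(s)}$ exactly when $S_s(a)\neq\emptyset$. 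So it suffices to find, for every $a$, a vector $b$ with $S_{s+1}(b)=S_s(a)$.

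The construction I would use is $b_i=a_i+1$ if $a_i\geqslant s/2$, and $b_i=a_i$ otherwise. We check the two kinds of edges.
\begin{itemize}
\item If $a_i+a_j\leqslant s-1$, then at most one of $a_i,a_j$ is $\geqslant s/2$. Hence $b_i+b_j\leqslant s$, so the edge stays in $S_{s+1}(b)$.
\item If $a_i+a_j\geqslant s$, then at least one of $a_i,a_j$ is $\geqslant s/2$. Hence $b_i+b_j\geqslant s+1$, so the edge stays out of $S_{s+1}(b)$.
\end{itemize}
The second bullet is the only place where integrality matters: if both $a_i,a_j<s/2$, then $a_i+a_j\leqslant s-1$, which excludes the case. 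This settles (i).

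For (ii), I would compute both the depths of the candidate radicals and which of them are actually realized. Each associated radical is $J(H)$ for a subgraph $H=G_S$ with $S=S_s(a)$, on the vertex set $\{1,\dots,n\}$. The plan has two steps.

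\emph{Lower bound.} First I would prove $\depth R/J(H)\geqslant n-\nu_0(H)-1$ for every subgraph $H$; the natural route is to cite the known bound $\pd R/J(H)\leqslant \nu_0(H)+1$ via Alexander duality, since $\reg I(H)\leqslant\nu_0(H)+1$ by Constantinescu–Varbaro. Combined with the monotonicity $\nu_0(H)\leqslant \nu_0(G)$ for subgraphs $H$ of $G$, Hochster's formula then gives $\depth R/J(G)^{(s)}\geqslant n-\nu_0(G)-1$ for all $s$.

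\emph{Upper bound for $s\geqslant 2\nu_0(G)-1$.} Take an ordered matching $M=\{\{u_i,v_i\}\}_{i=1}^{r}$ with $r=\nu_0(G)$, and choose exponents that increase along the ordering. For instance, set $a_{u_i}$ small, say $a_{u_i}=i-1$, set $a_{v_i}$ so that $a_{u_i}+a_{v_i}=s-1$, and give all other vertices exponent $s$. The order condition $\{u_i,v_j\}\in E\Rightarrow i\leqslant j$ and the independence of the $u_i$ should then force $S_s(a)$ to be a graph $H$ whose cover ideal has depth exactly $n-r-1$. The expected shape of $H$ is that $M$ is its ordered matching and $H$ is essentially a "staircase"; its depth can be computed by iterated short exact sequences or via a Koszul/Taylor argument. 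The requirement $s\geqslant 2r-1$ is what leaves room for $r$ strictly increasing levels inside $[0,s-1]$ while keeping $a_{v_i}\geqslant 0$.

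I expect this realization step to be the main obstacle. The difficulty is the exact verification that the chosen $a$ makes $S_s(a)$ a graph with $\pd R/J(H)=r+1$; in particular, the inequality $a_{u_i}+a_{v_j}\geqslant s$ for $i>j$ must be checked carefully. If this direct construction fails, the fallback is to follow the degree-complex (Takayama) method of \cite{HKTT}.
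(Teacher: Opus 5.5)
The paper gives no proof of this lemma; it quotes \cite[Theorems 3.2 and 3.4]{HKTT}. Your part (i) is correct: the shift $b_i=a_i+1$ for $a_i\geqslant s/2$ gives $S_{s+1}(b)=S_s(a)$, so $\mathrm{assrad}(J(G)^{(s)})\subseteq \mathrm{assrad}(J(G)^{(s+1)})$, and Hochster's formula finishes it. Part (ii), however, has two genuine gaps.

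\textbf{The lower bound.} The monotonicity $\nu_0(H)\leqslant\nu_0(G)$ holds for \emph{induced} subgraphs. But $G_{S_s(a)}$ is a spanning subgraph obtained by deleting edges, and deleting edges makes both conditions in Definition \ref{ordered-matching} easier to satisfy, so $\nu_0$ can go up. This already happens for radicals that actually occur. Take $G=K_4$, $s=3$, $a=(1,1,2,0)$. Then $S_3(a)=\{\{1,2\},\{1,4\},\{2,4\},\{3,4\}\}$ is the paw graph, which has the ordered matching $u_1=1,v_1=2,u_2=3,v_2=4$. So $\nu_0(G_S)=2>1=\nu_0(K_4)$. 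Constantinescu--Varbaro applied to this $H$ only gives $\depth R/J(H)\geqslant 1$, whereas you need $\geqslant 2$. (The bound $2$ is true here: the paw is co-chordal, so $\reg I(H)=2$.) Hence the inequality $\pd R/J(G)^{(s)}\leqslant \nu_0(G)+1$ cannot be assembled radical by radical through $\nu_0(H)$. You need an input that controls $J(G)^{(s)}$ as a whole, or uses the special threshold shape of $S_s(a)$. One option: identify $\lim_s\depth R/J(G)^{(s)}=n-\nu_0(G)-1$ via the Krull dimension of the special fiber of the vertex cover algebra, computed by Constantinescu--Varbaro, and then apply your part (i).

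\textbf{The realization step.} As written, this step fails. With $a_{u_i}=i-1$ and $a_{v_j}=s-1-a_{u_j}$ you get $a_{u_i}+a_{v_j}=s-1-(j-i)\leqslant s-1$ for $i<j$. These are exactly the pairs where $\{u_i,v_j\}$ is allowed to be an edge, so all such cross edges survive in $S_s(a)$. The case $i>j$ that you flagged is vacuous by the ordering condition. The surviving \emph{staircase} can have too small regularity. For example, in $P_4$ take $u_1=x_2,v_1=x_1,u_2=x_4,v_2=x_3$ (an ordered matching with $r=2$) and $s=3$. Your recipe gives $a=(2,0,1,1)$, hence $S_3(a)=E(P_4)$ and $\depth R/J(P_4)=2>1=n-r-1$.

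The fix is to reverse the order: $a_{u_i}=r-i$, $a_{v_i}=s-1-r+i$, and $a_w=s$ for all other vertices. Then:
\begin{itemize}
\item[(a)] for $i<j$, $a_{u_i}+a_{v_j}=s-1+(j-i)\geqslant s$, so the cross edges are removed;
\item[(b)] for $i\neq j$, $a_{v_i}+a_{v_j}=2s-2-2r+i+j\geqslant s$, and this holds precisely because $s\geqslant 2r-1$.
\end{itemize}
That is the real role of the hypothesis; nonnegativity of the $a_{v_i}$ only needs $s\geqslant r$. Consequently $S_s(a)=M$ is an induced matching, so $\reg I(G_M)=r+1$ and $\depth R/J(G)^{(s)}\leqslant n-r-1$.
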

As a consequence for bipartite graph, we obtain.
\begin{lem}\label{sdstability} Let $G$ be a bipartite graph. Then,
$$\dst(J(G)) = \min \{t\geqslant 1\mid \depth R/J(G)^{t} \leqslant  n-\nu_0(G)-1\}.$$
\end{lem}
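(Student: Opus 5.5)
The plan is to combine normal torsion-freeness with the two parts of Lemma~\ref{hktt}. Since $G$ is bipartite, $J(G)^{(s)} = J(G)^s$ for all $s\geqslant 1$ by \cite[Theorem 5.1]{HHT1}. Hence Lemma~\ref{hktt} can be applied directly to ordinary powers. Put $L := n-\nu_0(G)-1$. By Lemma~\ref{hktt}(i), the sequence $\depth R/J(G)^t$ is non-increasing in $t$. By Lemma~\ref{hktt}(ii), it equals $L$ for all $t\geqslant 2\nu_0(G)-1$. Since $E(G)\neq\emptyset$, we have $\nu_0(G)\geqslant 1$, so $2\nu_0(G)-1\geqslant 1$. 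In particular $\lim_{t\to\infty}\depth R/J(G)^t = L$.

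The key observation is that a non-increasing sequence with limit $L$ satisfies $\depth R/J(G)^t\geqslant L$ for every $t\geqslant 1$. Therefore the condition $\depth R/J(G)^t\leqslant L$ is equivalent to $\depth R/J(G)^t = L$. Let $t_0$ be the minimum in the statement. This minimum exists because $t=\max\{1,2\nu_0(G)-1\}$ satisfies the condition.

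I would then prove the two inequalities between $\dst(J(G))$ and $t_0$.

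\emph{Upper bound $\dst(J(G))\leqslant t_0$.} Take any $m\geqslant t_0$. Monotonicity gives
$$L\leqslant \depth R/J(G)^m\leqslant \depth R/J(G)^{t_0}\leqslant L,$$
so $\depth R/J(G)^m = L$ for all $m\geqslant t_0$.

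\emph{Lower bound $\dst(J(G))\geqslant t_0$.} Suppose some $k<t_0$ satisfied $\depth R/J(G)^m = L$ for all $m\geqslant k$. Taking $m=k$ gives $\depth R/J(G)^{k}\leqslant L$, which contradicts the minimality of $t_0$.

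There is no real obstacle here. The only point needing care is the inequality $\depth R/J(G)^t\geqslant L$ for every $t$. It follows from monotonicity together with the eventual value supplied by Lemma~\ref{hktt}(ii), and both facts rely on identifying symbolic and ordinary powers for bipartite graphs.
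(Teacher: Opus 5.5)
Your proof is correct and follows the route the paper intends. The paper gives no separate argument: it presents the lemma as a direct consequence of Lemma~\ref{hktt} together with the equality $J(G)^{(s)}=J(G)^s$ for bipartite $G$ from \cite[Theorem 5.1]{HHT1}, and you have simply made explicit that the non-increasing sequence is bounded below by its limit $n-\nu_0(G)-1$, so the inequality in the minimum forces equality and picks out the first index of stability.
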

According to Lemma \ref{sdstability}, we derive the following result in \cite[Proposition 3.5]{BHHT}.
\begin{lem}\label{pathsdstability} Let $P_r$ be the path with $r$ vertices. Then,
$$
\dst(J(P_r))=
\begin{cases}
\frac{r}{2} & \text{ if } r \text{ is even},\\
\left\lceil \frac{r-1}{4}\right\rceil & \text{ if } r \text{ is odd}.
\end{cases}
$$    
\end{lem}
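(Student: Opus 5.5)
By Lemma~\ref{sdstability} and Lemma~\ref{orderedmat}, it suffices to find the least $t$ with $\depth R/J(P_r)^t\leqslant r-\lfloor r/2\rfloor-1$. Since $P_r$ is bipartite, $J(P_r)^t=J(P_r)^{(t)}=\bigcap_{i=1}^{r-1}(x_i,x_{i+1})^t$.

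\emph{Reduction to a combinatorial problem.} By Remark~\ref{associatedradical}, a monomial $f=x_1^{a_1}\cdots x_r^{a_r}$ gives
\[
\sqrt{J^{(t)}:f}=\bigcap_{e_i\in S}(x_i,x_{i+1}),\qquad S=\{e_i \mid a_i+a_{i+1}\leqslant t-1\},
\]
where $e_i=\{i,i+1\}$. This ideal is the cover ideal of the forest $G_S$, viewed in $R$. For a forest, $\pd R/J(G_S)=\reg I(G_S)=\nu'(G_S)+1$, so $\depth R/J(G_S)=r-\nu'(G_S)-1$. Hochster's formula \eqref{Hochsterformula} then gives
\[
\depth R/J(P_r)^t=r-1-\max\{\nu'(G_S)\}.
\]
The maximum runs over the \emph{realizable} $S$, namely those for which some $a\in\mathbb{Z}_{\geqslant0}^r$ satisfies $a_i+a_{i+1}\leqslant t-1$ exactly when $e_i\in S$, and $a_i+a_{i+1}\geqslant t$ otherwise. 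The lemma is therefore equivalent to: a realizable $S$ with $\nu'(G_S)\geqslant\lfloor r/2\rfloor$ exists if and only if $t\geqslant r/2$ (for $r$ even), respectively $t\geqslant\lceil (r-1)/4\rceil$ (for $r$ odd).

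\emph{Even case $r=2k$.} An induced matching of size $k$ in $G_S\subseteq P_{2k}$ must cover every vertex. Hence it is $\{e_1,e_3,\dots,e_{2k-1}\}$, and $S$ must be exactly this set: any extra edge $e_{2j}$ would join two matching edges. Realizability then requires
\[
a_{2j-1}+a_{2j}\leqslant t-1 \quad (1\leqslant j\leqslant k),\qquad a_{2j}+a_{2j+1}\geqslant t \quad (1\leqslant j\leqslant k-1).
\]
Summing the second family and bounding it by the first gives
\[
(k-1)t\leqslant \sum_{i=2}^{2k-1}a_i\leqslant k(t-1)-a_1-a_{2k}\leqslant k(t-1),
\]
hence $t\geqslant k$. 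For $t=k$ we realize $S$ explicitly with the ramp $a_{2j-1}=j-1$, $a_{2j}=k-j$. Then the first family of sums equals $k-1=t-1$ and the second equals $k=t$, as required. So $\dst=k=r/2$.

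\emph{Odd case $r=2k+1$.} Here $\nu'(G_S)=k$ allows more freedom: exactly one vertex is left uncovered by the matching, or else the matching edges are separated by longer gaps of non-$S$ edges. The plan is as follows.
\begin{itemize}
\item[(a)] Classify the realizable $S$ with $\nu'(G_S)=k$. They consist of alternating blocks in which each $S$-edge is a matching edge, and a single ``defect'' where two consecutive edges are both outside $S$, or a matching edge sits at an end.
\item[(b)] Run the telescoping inequality above on each alternating chain separately. A chain containing $m$ matched edges bounded by free endpoints forces roughly $t\geqslant m$. However, a chain bordered by the defect, where a large exponent can be placed (the non-$S$ edges on both sides of it only need sums $\geqslant t$), only forces about half of that.
\item[(c)] Place the defect in the middle, so the two chains have about $k/2$ matched edges each. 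Then choose ramps decreasing towards the defect, with a large exponent $a_c$ at the central vertex. This should give a construction for $t=\lceil k/2\rceil=\lceil (r-1)/4\rceil$.
\item[(d)] Show the matching lower bound by applying the summation argument to the longer of the two chains.
\end{itemize}
The main obstacle is step (a) together with the precise lower bound in (d): one must verify that no other configuration of $S$ (for instance several defects, or matching edges at the ends of the path) beats the central-defect arrangement, and pin down the exact ceiling for $k$ even versus $k$ odd. If a direct case analysis becomes messy, I would first fall back on citing \cite[Proposition 3.5]{BHHT} for this step.
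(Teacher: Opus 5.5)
\textbf{Even case: correct, and a different route from the paper.} The paper gives no argument of its own for this lemma: it combines Lemma~\ref{sdstability} with a citation of \cite[Proposition 3.5]{BHHT}. Your Hochster-formula reduction is therefore a genuinely self-contained alternative, and for $r=2k$ it is complete and correct. The set $S$ is forced to be $\{e_1,e_3,\dots,e_{2k-1}\}$, the telescoping gives $t\geqslant k$, and the ramp realizes $t=k$.

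\textbf{Odd case: a genuine gap.} For $r=2k+1$ you give only a plan, and its key step (b) is false as written. A chain bordered by the defect does \emph{not} force only ``about half'' of its length. Suppose the matching misses the vertex $2j+1$. The left chain on $\{1,\dots,2j\}$ carries exactly the constraints of your even case on $P_{2j}$. The condition on $e_{2j}$ involves the unconstrained exponent $a_{2j+1}$ and never enters the telescoping. So this chain still forces $t\geqslant j$, however large $a_{2j+1}$ is. Taken literally, (b) would let (c) reach $t\approx k/4$, which contradicts your own target $\lceil k/2\rceil$. The large central exponent only \emph{decouples} the two chains. The halving comes from splitting the $k$ matched edges into two independent chains, each of which costs its full length.

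\textbf{Step (a) is immediate, so the fallback citation is unnecessary.} A matching of size $k$ in $P_{2k+1}$ misses exactly one vertex $v$, and parity forces $v=2j+1$ with $0\leqslant j\leqslant k$. Hence the matching is $M_j=\{e_1,e_3,\dots,e_{2j-1},e_{2j+2},e_{2j+4},\dots,e_{2k}\}$. Being induced in $G_S$ forces every edge that is neither in $M_j$ nor incident to $v$ to lie outside $S$.
\begin{itemize}
\item The edges $e_{2j},e_{2j+1}$ may lie in $S$; your description in (a) overlooks this, but it only adds constraints.
\item So there is exactly one defect and never several; the ``matching edge at an end'' situations are just $j\in\{0,k\}$.
\end{itemize}
The two chains give $t\geqslant j$ and $t\geqslant k-j$. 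Hence $t\geqslant\min_j\max(j,k-j)=\lceil k/2\rceil=\lceil (r-1)/4\rceil$.

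\textbf{Construction.} Take $j=\lfloor k/2\rfloor$ and $t=\lceil k/2\rceil$, and set:
\begin{itemize}
\item $a_{2i-1}=i-1$ and $a_{2i}=t-i$ for $1\leqslant i\leqslant j$;
\item $a_{2j+1}=t$;
\item symmetrically, $a_{2k+3-2i}=i-1$ and $a_{2k+2-2i}=t-i$ for $1\leqslant i\leqslant k-j$.
\end{itemize}
Every edge of $M_j$ then has exponent sum $t-1$. The gap edges have sum exactly $t$. The two edges at $v$ have sums $2t-j\geqslant t$ and $2t-(k-j)\geqslant t$. So $S=M_j$ and $\nu'(G_S)=k$, and together with Lemma~\ref{sdstability} this finishes the odd case.
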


By Lemma  \cite[Theorem 5.59]{MS}, and the fact $J(G) = I(G)^{\vee}$, it follows. 
\begin{lem}\label{equals} $\pd R/J(G) = \reg I(G).$
\end{lem}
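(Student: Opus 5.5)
The plan is to deduce the equality from Alexander duality, namely Terai's theorem as recorded in \cite[Theorem 5.59]{MS}, applied to the squarefree monomial ideal $I(G)$.

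First we identify $J(G)$ with the Alexander dual of $I(G)$. The minimal primes of $I(G)$ are the ideals $(x_i \mid i\in\tau)$ with $\tau$ a minimal vertex cover of $G$. The Alexander dual of a squarefree monomial ideal is generated by the monomials $\x_\tau$ for which $(x_i\mid i\in\tau)$ is a minimal prime. So $I(G)^{\vee}=(\x_\tau\mid \tau \text{ a minimal vertex cover})=J(G)$. Equivalently, dualizing the monomial generators $x_ux_v$ of $I(G)$ into the primes $(x_u,x_v)$ recovers the decomposition \eqref{intersect}.

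Next we invoke Terai's theorem: for a squarefree monomial ideal $I\subseteq R$, one has $\pd(I^{\vee})=\reg(R/I)$. If one wants to see where this comes from, compare Hochster's formula with its dual form. The $\Z^n$-graded Betti numbers $\beta_{i,\sigma}(I^{\vee})$ are computed by reduced homology of the Alexander dual complex restricted to $\sigma$. The Betti numbers $\beta_{i,\sigma}(I)$ are computed by the reduced cohomology of the links of the Stanley--Reisner complex. Alexander duality then matches homological degree $i$ of $I^{\vee}$ with the quantity $|\sigma|-i$ on the side of $I$. Taking maxima over $\sigma$ turns $\max\{i\}$ (projective dimension) into $\max\{j-i\}$ (regularity).

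Finally we adjust indices. Since $\pd R/L=\pd L+1$ and $\reg R/L=\reg L-1$ for any proper nonzero ideal $L$, we get
\[
\pd R/J(G)=\pd J(G)+1=\reg R/I(G)+1=\reg I(G).
\]
This is the claim.

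The only point requiring care is the bookkeeping of these shifts between $R/L$ and $L$ in Terai's statement. Different sources state the theorem either as $\pd(I^{\vee})=\reg(R/I)$ or as $\pd(R/I^{\vee})=\reg(I)$, and the two forms are equivalent by exactly this shift. Everything else is a direct citation.
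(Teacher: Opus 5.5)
Your proposal is correct and follows the paper's argument: the paper likewise just combines $J(G)=I(G)^{\vee}$ with Terai's theorem \cite[Theorem 5.59]{MS}, and your index bookkeeping $\pd R/J(G)=\pd J(G)+1=\reg R/I(G)+1=\reg I(G)$ is right. Your optional Hochster-formula sketch is loosely worded (the dual Hochster formula computes $\beta_{i,\sigma}(I_\Delta)$ from links in the dual complex $\Delta^{\vee}$), but nothing in your proof depends on it.
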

By  Lemma \ref{equals}, and together with \cite[Theorem 1.3.3]{BH}, it yields. 
\begin{lem}\label{depthreg1} Let $G$ be a simple graph. Then, $\depth R/J(G)=n-\reg(I(G)).$
\end{lem}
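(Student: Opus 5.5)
The statement is Lemma \ref{depthreg1}: $\depth R/J(G)=n-\reg(I(G))$. I would deduce it directly from Lemma \ref{equals} together with the Auslander--Buchsbaum formula recalled in Section 2.

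First, $J(G)$ is a nonzero proper homogeneous ideal (as $E(G)\neq\emptyset$), so $R/J(G)$ is a nonzero finitely generated graded $R$-module. The Auslander--Buchsbaum formula (the graded version of \cite[Theorem 1.3.3]{BH}) therefore gives
$$\depth R/J(G)=n-\pd R/J(G).$$
Second, substitute Lemma \ref{equals}, namely $\pd R/J(G)=\reg I(G)$, to get $\depth R/J(G)=n-\reg I(G)$.

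The only point needing care is the normalization convention in Lemma \ref{equals}. Terai's theorem \cite[Theorem 5.59]{MS}, applied with $J(G)=I(G)^{\vee}$, states $\pd(R/I^{\vee})=\reg(I)$, with regularity taken for the ideal $I(G)$ rather than the quotient $R/I(G)$. Since $\reg I(G)=\reg R/I(G)+1$, I would check that the indexing in Lemma \ref{equals} matches this ideal-regularity convention. Once that is confirmed, the result follows immediately, so I do not expect any real obstacle.

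As a sanity check, take $G=P_2$. Then $J=(x_1,x_2)$, so $\depth R/J=0$. On the other side, $I=(x_1x_2)$ has $\reg I=2$, and $n-\reg I=2-2=0$, so the two sides agree.
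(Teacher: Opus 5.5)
Your proposal is correct and matches the paper's proof. The Auslander--Buchsbaum formula \cite[Theorem 1.3.3]{BH} gives $\depth R/J(G)=n-\pd R/J(G)$, and Lemma~\ref{equals} (Terai's theorem applied to $J(G)=I(G)^{\vee}$) replaces $\pd R/J(G)$ by $\reg I(G)$. Your convention check is also right: $\reg$ here is the regularity of the ideal $I(G)$, not of $R/I(G)$, which is consistent with the paper's later use of $\reg I(G)=\nu'(G)+1$ for forests.
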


\section{Depth of powers of cover ideals of paths}
In this section,  we determine the depth of the powers of cover ideals associated with paths. For a real number $x$, denote  $\left\lceil x\right\rceil$ the least integer at least $x$,   $\left\lfloor x\right\rfloor$ the largest integer at most $x$. First, we have a simple lemma.
\begin{lem}\label{phannguyen}  Let $a, b$ be positive integers, and $x$ be a real number. Then 
\begin{itemize}
\item[(i)] $\left\lceil \dfrac{a}{2}\right\rceil=\left\lfloor \dfrac{a+1}{2}\right\rfloor.$
\item[(ii)] $\left\lfloor \dfrac{\left\lfloor x\right\rfloor}{b}\right\rfloor=\left\lfloor \dfrac{x}{b}\right\rfloor.$
\end{itemize}
\end{lem}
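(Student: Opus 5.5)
For (i), I will split on the parity of the positive integer $a$. If $a=2m$, then $\left\lceil a/2\right\rceil = m$. Also $(a+1)/2 = m+\tfrac12$, so $\left\lfloor (a+1)/2\right\rfloor = m$. If $a=2m+1$, then $a/2=m+\tfrac12$, so $\left\lceil a/2\right\rceil = m+1$, and $(a+1)/2=m+1$ is an integer, so its floor is $m+1$. The two sides agree in both cases. Positivity of $a$ plays no role; only integrality matters.

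For (ii), I will use the characterization of the floor function: for an integer $q$ and real $y$, $q=\lfloor y\rfloor$ if and only if $q\leqslant y<q+1$. Set $q=\left\lfloor x/b\right\rfloor$. The goal is $q\leqslant \lfloor x\rfloor/b<q+1$.

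The lower bound comes from $q\leqslant x/b$, which gives $qb\leqslant x$. Since $qb$ is an integer (here $b$ being an integer is used), this yields $qb\leqslant \lfloor x\rfloor$. Dividing by $b>0$ gives $q\leqslant \lfloor x\rfloor/b$.

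The upper bound comes from $\lfloor x\rfloor\leqslant x<(q+1)b$, which gives $\lfloor x\rfloor/b<q+1$, again using $b>0$.

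Together these show $\left\lfloor \lfloor x\rfloor/b\right\rfloor = q$, as required. The only step needing care is passing from $qb\leqslant x$ to $qb\leqslant\lfloor x\rfloor$, which uses that $\lfloor x\rfloor$ is the largest integer not exceeding $x$. That is where the hypothesis that $b$ is a positive integer is essential. I do not expect any real obstacle.
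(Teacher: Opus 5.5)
Your proof is correct and follows the paper's own argument: a parity split for (i), and for (ii) setting $q=\lfloor x/b\rfloor$, using that $qb$ is an integer to get $qb\leqslant\lfloor x\rfloor\leqslant x<(q+1)b$, then dividing by $b>0$.
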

\begin{proof}  $(i)$ By considering $a$ is even and $a$ is odd. The statement can be easily derived from the definition of the integer part function.\\
$(ii)$ Suppose $\left\lfloor \dfrac{x}{b}\right\rfloor=k$. Then, $k\leqslant \dfrac{x}{b}<k+1,$ that means $bk\leqslant x <b(k+1).$ As $bk$  is an integer less than or equal to $x$, we have $bk \leqslant \left\lfloor x\right\rfloor$. Therefore, 
$$bk \leqslant \left\lfloor x\right\rfloor \leqslant x<b(k+1).$$
By this, it follows $k\leqslant \left\lfloor \dfrac{\left\lfloor x\right\rfloor}{b}\right\rfloor<k+1$. Thus, we have $\left\lfloor \dfrac{\left\lfloor x\right\rfloor}{b}\right\rfloor=k,$ as required. 
 \end{proof}
Let $P_n$ be a path graph with vertex set $V(P_n)=\{x_1, \ldots, x_n\}$, and edge set \break $E(P_n)=\big\{e_i=\{x_i,x_{i+1}\}\mid i=1, \ldots, n-1\big\}$ with $E(P_n)\neq \emptyset$. By \eqref{intersect}, the cover ideal of $P_n$ as follows
$$J(P_n)= \bigcap_{i=1}^{n-1}e_i=\bigcap_{i=1}^{n-1} (x_{i}, x_{i+1}),$$
and $$J(P_n)^t= \bigcap_{i=1}^{n-1}e_i^t=\bigcap_{i=1}^{n-1} (x_{i}, x_{i+1})^t, \text{ for all } t\geqslant 1.$$
For each monnomial $f=x_1^{a_1}\ldots x_n^{a_n}\notin J(P_n)^t$, by Remark \ref{associatedradical}, it follows
\begin{equation}
\sqrt{J(P_n)^t\colon f}=\bigcap\limits_{ e_i\in S}e_i =\bigcap\limits_{ e_i\in S} (x_{i}, x_{i+1}),
\end{equation}
where 
$$S=\{e_i\mid f\notin (x_i,x_{i+1})^t\}=\{e_i \mid a_i+a_{i+1}\leqslant t-1\}.$$
Let $J_S=\bigcap\limits_{e_i\in S} (x_{i}, x_{i+1})$, and we set $G_S$ is a subgraph of $P_n$, in which $E(G_S)=\big\{e_i=\{x_i, x_{i+1}\}\mid e_i\in S\big\}.$ Obvisously, $G_S$ is a forest with its each connected component is a path. By Hochster's depth formula \eqref{Hochsterformula} we can compute 
\begin{align}
\depth R/J(P_n)^t&=\underset{f\notin J(P_n)^t}\min\{\depth R/\sqrt{J(P_n)^t\colon f}\}\notag\\
&=\underset{S}\min \{\depth R/J_S\},
\end{align}
and the minimal value takes over for all $S,$ which can appear from a monomial $f\notin J(P_n)^t.$ 

For each set $S$, by the Lemma \ref{depthreg1}, it follows that 
\begin{equation}\label{depthreg2}
\depth R/J_S=n-\reg I(G_S),
\end{equation}
in which $I(G_S)$ is edge ideal of graph $G_S.$ Thus, 

\begin{equation}\label{depthreg3}
\depth R/J(P_n)^t=\underset{S}\min \{\depth R/J_S\}=n-\underset{S}\max\reg I(G_S),
\end{equation}
in which $I(G_S)$ is edge ideal of graph $G_S$, and the maximal value takes over for all $S,$ which can appear from a monomial $f\notin J(P_n)^t.$ 

 In order to compute $\reg I(G_S)$, we recall the following result. 

\begin{lem}\cite[Theorem 4.7 ]{BHT}\label{Hahuytai} Let  $G$ be a forest with edge ideal $I=I(G).$ Let $\nu'(G)$ denote the induced matching number of $G$. Then, 
\begin{equation*}
\reg I(G)=\nu'(G)+1.
\end{equation*}
\end{lem}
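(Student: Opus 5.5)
The plan is to prove the two inequalities $\reg I(G)\geqslant \nu'(G)+1$ and $\reg I(G)\leqslant \nu'(G)+1$ separately. It is convenient to work with $\reg R/I(G)=\reg I(G)-1$, so the goal becomes $\reg R/I(G)=\nu'(G)$. We use the convention that an edgeless graph has edge ideal $0$, with $\reg R/0=0=\nu'$.

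\emph{Lower bound.} Let $M=\{e_1,\dots,e_m\}$ be an induced matching with $m=\nu'(G)$, and let $H=G[\bigcup e_i]$. Because $M$ is induced, $H$ consists of $m$ disjoint edges, so $I(H)$ is generated by a regular sequence of $m$ quadrics in disjoint variables. The Koszul complex then gives $\reg R/I(H)=m$. Since $H$ is an induced subgraph of $G$, the standard restriction lemma gives $\reg R/I(G)\geqslant \reg R/I(H)$. One way to see this is through Hochster's formula for Betti numbers, where restricting the Stanley--Reisner complex to $V(H)$ yields the Betti numbers of $I(H)$ as multigraded pieces of those of $I(G)$. This gives $\reg R/I(G)\geqslant \nu'(G)$.

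\emph{Upper bound.} We induct on the number of vertices, using that every forest with an edge has a leaf $x$; let $y$ be its unique neighbour. Write $I=I(G)$. The short exact sequence
$$0\to R/(I:y)(-1)\xrightarrow{\;\cdot y\;} R/I\to R/(I,y)\to 0$$
gives
$$\reg R/I\leqslant \max\{\reg R/(I:y)+1,\ \reg R/(I,y)\}.$$

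For the second term, $(I,y)=I(G\setminus y)+(y)$. Adding a variable does not change regularity, so $\reg R/(I,y)=\reg R/I(G\setminus y)$. By induction this equals $\nu'(G\setminus y)$, and $\nu'(G\setminus y)\leqslant\nu'(G)$ because $G\setminus y$ is an induced subgraph of $G$.

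For the first term, $I:y=(x_v\mid v\in N(y))+I(G\setminus N[y])$, so $\reg R/(I:y)=\reg R/I(G\setminus N[y])=\nu'(G\setminus N[y])$, again by induction since $G\setminus N[y]$ is a smaller forest. The key combinatorial step is that $\nu'(G\setminus N[y])+1\leqslant \nu'(G)$. Take any induced matching $M'$ of $G\setminus N[y]$ and add the edge $\{x,y\}$. The only neighbour of $x$ is $y$, and every neighbour of $y$ was deleted. Hence no edge of $M'$ touches or is joined by an edge to $\{x,y\}$, and $M'\cup\{\{x,y\}\}$ is an induced matching of $G$.

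Combining the two terms gives $\reg R/I\leqslant \nu'(G)$. This completes the induction, and therefore $\reg I(G)=\nu'(G)+1$.

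The step needing most care is this last one: checking that the colon ideal really splits off the variables of $N(y)$, and that the leaf edge extends any induced matching of $G\setminus N[y]$. This is exactly where choosing $x$ to be a leaf is essential. The degenerate cases also need attention, namely when $G\setminus N[y]$ or $G\setminus y$ is edgeless, where the regularity is $0$ and the inequalities hold trivially.
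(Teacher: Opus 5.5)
Your proof is correct, and it takes a different route from the paper. The paper gives no argument of its own. It quotes Theorem 4.7 of Beyarslan--H\`a--Trung, which asserts $\reg I(G)^q=2q+\nu'(G)-1$ for every power $q\geqslant 1$ of the edge ideal of a forest; the lemma is the case $q=1$ of that result, a case that goes back to Zheng.

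You prove the $q=1$ statement directly and self-containedly, in two halves:
\begin{itemize}
\item The lower bound comes from restricting Betti numbers to the induced subgraph on a maximum induced matching, which is a complete intersection of $\nu'(G)$ quadrics.
\item The upper bound comes from the colon exact sequence at the neighbour $y$ of a leaf $x$. This gives the vertex-splitting inequality $\reg R/I(G)\leqslant\max\{\reg R/I(G\setminus N[y])+1,\ \reg R/I(G\setminus y)\}$. You combine it with the observation that $\{x,y\}$ extends every induced matching of $G\setminus N[y]$ to one of $G$.
\end{itemize}
Each step checks out: the description of $I:y$, the invariance of regularity under adjoining free variables, the heredity of forests under induced subgraphs, and the edgeless degenerate cases.

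What your approach buys is transparency and a little generality. It only uses that $N(x)\subseteq N[y]$ and that the graph class is closed under induced subgraphs. So, for example, it works verbatim for chordal graphs, taking $x$ to be a simplicial vertex. The citation buys a much stronger statement about all powers. This paper does not need that, since the lemma is only ever applied to the radicals $J_S$ through $\depth R/J_S=n-\reg I(G_S)$, i.e.\ only for $q=1$.
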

 
  The rest of the paper is devoted to compute explicitly the  depth functions for the cover ideals of paths. We start with one of the main results in this section is cover ideals of even paths.
    
  \begin{thm}\label{main1}
Let $P_n$  be an even path with  $n=2k$.  Then
$$\depth R/J(P_{2k})^t = \begin{cases}
k-1+ \left\lfloor  \dfrac{k+t}{2t+1}\right\rfloor & \text{ for all }  1\leqslant t\leqslant k -1,\\
 k-1  & \text{ for all } t\geqslant  k.
 \end{cases}$$
   \end{thm}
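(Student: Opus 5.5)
The plan is to reduce the statement, via \eqref{depthreg3} and Lemma \ref{Hahuytai}, to a purely combinatorial extremal problem. Since $n=2k$, every realizable $S$ gives $\depth R/J_S = 2k-\nu'(G_S)-1$. Hence
$$\depth R/J(P_{2k})^t = 2k-1-\max_S \nu'(G_S),$$
where $S$ runs over the sets $\{e_i \mid a_i+a_{i+1}\leqslant t-1\}$ with $(a_1,\dots,a_n)\in\mathbb{Z}_{\geqslant 0}^n$ and $S\neq\emptyset$.

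The case $t\geqslant k$ will be handled directly from the earlier results. By Lemma \ref{orderedmat}, $\nu_0(P_{2k})=k$, so $n-\nu_0-1=k-1$. By Lemma \ref{pathsdstability}, $\dst(J(P_{2k}))=k$, and Lemma \ref{hktt} (together with normal torsion-freeness) then gives $\depth R/J(P_{2k})^t=k-1$ for $t\geqslant k$. So the real content is to show that for $1\leqslant t\leqslant k-1$,
$$M(k,t):=\max_S\nu'(G_S)=k-\left\lfloor \dfrac{k+t}{2t+1}\right\rfloor .$$

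\textbf{Key structural lemma (upper bound).} Let $M$ be an induced matching of $G_S$, listed left to right as $e_{i_1},\dots,e_{i_m}$. Consecutive matching edges satisfy $i_{j+1}\geqslant i_j+2$. Call a gap \emph{tight} if $i_{j+1}=i_j+2$. In that case the middle edge $e_{i_j+1}$ cannot lie in $S$, since otherwise $M$ would not be induced in $G_S$. So $a_{i_j+1}+a_{i_j+2}\geqslant t$, while $a_{i_j}+a_{i_j+1}\leqslant t-1$ and $a_{i_j+2}+a_{i_j+3}\leqslant t-1$.

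Subtracting the inequalities, a maximal run of consecutive tight gaps forces the values $a_{i_j}$ at the left endpoints of the matching edges to strictly decrease. Similarly, the right endpoints $a_{i_j+1}$ strictly increase, starting at $\geqslant 0$ and bounded by $t-1$. Hence a run contains at most $t$ tight gaps, i.e.\ at most $t+1$ matching edges occupy a window of exactly $2(t+1)$ consecutive vertices $x_{i_j},\dots$ with no slack.

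Since a tight packing of $m$ edges uses $2m$ vertices and every non-tight gap wastes at least one vertex, we get a counting inequality: $2k\geqslant 2m+(\text{number of non-tight gaps})$, and there are at least $\lceil m/(t+1)\rceil-1$ non-tight gaps. Optimizing over $m$ should give $m\leqslant k-\lfloor (k+t)/(2t+1)\rfloor$. Lemma \ref{phannguyen} will be used to massage the floor/ceiling expressions into this exact form.

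\textbf{Construction (lower bound).} Cut $x_1,\dots,x_{2k}$ into consecutive blocks of $2t+3$ vertices, each block containing $t+1$ matching edges separated by $t$ tight gaps plus one wasted vertex, with a shorter final block. Inside a block, take $a$ at the left endpoints of the matching edges to be $t-1,t-2,\dots,0$ and at the right endpoints $0,1,\dots,t-1$, arranged so that the separating edges have sum exactly $t$. At the wasted vertex, put a large exponent to break the chain and reset. This realizes an $S$ (nonempty, hence $f\notin J^t$) whose $G_S$ contains an induced matching of the required size. It must also be checked that the shorter final block does not create an extra loss, using $t\leqslant k-1$.

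\textbf{Main obstacle.} I expect the main difficulty to be the exact bookkeeping in the upper bound: matching the block length to the denominator $2t+1$ and the offset $k+t$ in the formula, including the boundary effects at both ends of the path where matching edges need not be preceded or followed by waste. I would first verify the formula for small cases ($t=1$, and $k=t+1$ where it must give $k-1$ consistent with stability) to calibrate whether the waste is best counted per block of $2t+1$ edges or $2t+3$ vertices. Then I would write the general inequality in the form $2m+\lceil m/(t+1)\rceil-1\leqslant 2k$ and solve for $m$.
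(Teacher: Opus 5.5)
Your reduction to an extremal problem and your tight-run/wasted-vertex framework are sound, but the key structural lemma is off by one, and the error breaks both the bound and the construction. In a tight gap, $a_{i_j}+a_{i_j+1}\leqslant t-1$ and $a_{i_j+1}+a_{i_j+2}\geqslant t$ give $a_{i_j+2}\geqslant a_{i_j}+1$. So the left-endpoint exponents strictly \emph{increase*} and the right-endpoint exponents strictly decrease, all inside $\{0,\ldots,t-1\}$. A strictly monotone sequence in a $t$-element set has at most $t$ terms, so a maximal tight run contains at most $t$ matching edges, i.e.\ at most $t-1$ tight gaps, not $t$ gaps and $t+1$ edges.

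With your version, the inequality $2m+\lceil m/(t+1)\rceil-1\leqslant 2k$ is too weak. For $t=1$, $k=2$ it permits $m=2$, i.e.\ depth $1$, whereas $\depth R/J(P_4)=2$. Indeed, for $t=1$ no tight gap can exist, since $a_{i+1}=a_{i+2}=0$ would put $e_{i+1}$ into $S$. Your construction also fails. With left endpoints $t-1-j$ and right endpoints $j$, the separating edge has exponent sum $j+(t-2-j)=t-2<t$, so it lies in $S$ and the matching is not induced. Moreover, the values you list supply only $t$ edges per run, not $t+1$.

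The repair is as follows. Within a run with matching edges $\{u_j,v_j\}$, $0\leqslant j\leqslant t-1$, read left to right, put $a(u_j)=j$ and $a(v_j)=t-1-j$. Each matching edge then has sum $t-1$ and each separating edge has sum exactly $t$. Separating consecutive runs by one vertex gives blocks of $2t+1$ vertices, which is precisely the paper's block of $2t+1$ edges. The counting inequality becomes $2m+\lceil m/t\rceil-1\leqslant 2k$.

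You must then actually solve this inequality rather than announce the outcome. Write $k+t=(2t+1)p+\rho$ with $0\leqslant\rho\leqslant 2t$. Taking $m=k-p$ gives $2m+\lceil m/t\rceil-1=2k-2+\lceil\rho/t\rceil\leqslant 2k$. Taking $m=k-p+1$ gives $2k+\lceil(\rho+1)/t\rceil>2k$. Since the left-hand side is strictly increasing in $m$, the maximum of $\nu'(G_S)$ is $k-\lfloor (k+t)/(2t+1)\rfloor$. It is attained by runs of length $t$ (the last possibly shorter) separated by single vertices.

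With these corrections your argument is complete. Unlike the paper, which only exhibits its block configuration and asserts that it is optimal, your argument proves that no other realizable $S$ does better. Your treatment of $t\geqslant k$ coincides with the paper's.
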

    \begin{proof}  For $1\leqslant t\leqslant k -1$, by \eqref{depthreg3} we have 
   $$\depth R/J(P_{2k})^t=\underset{S}\min \{\depth R/J_S\}=2k-\underset{S}\max\reg I(G_S),$$
   in which $I(G_S)$ is edge ideal of the forest $G_S$, and the maximal value takes over for all $S=\big\{e_i \in E(P_{2k})\mid a_i+a_{i+1}\leqslant t-1\big\}$, which can appear from a monomial $f\notin J(P_{2k})^t.$ In order to maximize $\reg I(G_S)$, we need  the forest $G_S$ have to contain as many discrete connected components as possible. By Lemma \ref{Hahuytai}, it follows 
   \begin{align}\label{findmax}
\depth R/J(P_{2k})^t=2k-\max\reg I(G_S)=2k-\underset{S} \max \ \nu'(G_S)-1. 
\end{align}
   Set $\Delta=\underset{S} \max \ \nu'(G_S)$, we need to investigate the $G_S$, for all set $S$, to get $\Delta.$ To do this, we will choose a configuration of  the set $S$ such that it has as many disjoint edges as possible, as follows:
   
    We consider a block consisting of $2t$ edges consecutive of $P_{2k}$. In this block, selected  disjoint edges, for example, edges in odd positions $((1-2), (3-4), \ldots, ((2t-1)-2t))$. Place these edges in $S$, with the following value assignment:
   \begin{align*}
   e_1=\{x_1, x_2\}\in S &\text{ with } a_1=0,  \text{ and } a_2\leqslant t-1;\\
   e_2=\{x_2, x_3\}\notin S &\text{ with } a_2\leqslant t-1, \text{ and }  a_3\geqslant 1;\\
   e_3=\{x_3, x_4\}\in S &\text{ with } a_3\geqslant 1, \text{ and }  a_4\leqslant t-2;\\
    e_4=\{x_4, x_5\}\notin S &\text{ with } \ a_4\leqslant t-2, \text{ and }  a_5 \geqslant 2;\\
     \ldots  \ldots  \ldots  \ldots  \ldots  \ldots & \ldots  \ldots  \ldots  \ldots  \ldots  \ldots  \ldots  \ldots  \ldots 
   \end{align*}

   We observe that after each pair of edges $e_i\in S$ and $e_{i+1}\notin S$, with $i \geqslant 1$, the minimum exponent at the odd vertex increases by $1$ unit. Therefore, after $t$ such pairs of edges are selected, the exponent at the $2t+1$ odd vertex will reach the threshold $t$. This implies that we cannot choose another edge belonging to $S$ while still satisfying the total exponent remains less than or equal to $t-1$. Hence, to connect the two blocks, an edge not belonging to $S$ is needed between the last vertex of the first block (value $t$) and the first vertex of the second block (value $0$). By making such a selection, we create consecutive blocks of $2t+1$ edges of $P_{2k}$, where $t$ edges belong to $S$, interspersed with $t-1$ edges not belonging to $S$, and the last two edges of the block do not belong to $S$. 
   
   Since $1\leqslant t\leqslant k -1$, it follows $2t+1\leqslant 2k-1$, thus by this choosing, we can  get $\left\lfloor  \dfrac{2k-1}{2t+1}\right\rfloor$ blocks of $P_{2k}$. If the number of remaining edges is $r$ with $0 \leqslant r < 2t+1$, we can choose up to $\left\lfloor  \dfrac{r+1}{2}\right\rfloor$ additional disjoint edges in the remainder, with the same value assignment method.
   
   Therefore, we have divided the set edges of $P_{2k}$  into $m=\left\lfloor  \dfrac{2k-1}{2t+1}\right\rfloor$ complete blocks (each block having $2t+1$ edges) and the remainder contains $r$ edges, with:
   \begin{align}\label{eq}
   2k-1=m(2t+1)+r, \text{ where } 0 \leqslant r \leqslant  2t.
   \end{align}
     Then, the following can be achieved $\Delta=\underset{S} \max \ \nu'(G_S)=mt+\left\lfloor  \dfrac{r+1}{2}\right\rfloor.$ Since,  $2k-1=m(2t+1)+r, \text{ with } 0 \leqslant r \leqslant  2t.$ It deduces 
  \begin{align}\label{findk}
k=mt +\dfrac{m+r+1}{2}.
\end{align}
   It follows
   $$\Delta=mt+\left\lfloor  \dfrac{r+1}{2}\right\rfloor=k- \dfrac{m}{2}-\dfrac{r+1}{2}+\left\lfloor  \dfrac{r+1}{2}\right\rfloor.$$
   Set $u=\frac{r+1}{2}-\left\lfloor  \frac{r+1}{2}\right\rfloor$, we have $0\leqslant u<1$, and  then $\Delta=k- \frac{m}{2}-u.$ Since $\Delta$ is an integer, $u$ must compensate for the fractional part of $\frac{m}{2}$.
        We investigate  the following two cases:
   
 {\it Case $1$:} If $m=2p$ is even,  it follows  $u=0$ (it means  that $r$ is odd). Thus $\Delta=k-p.$
 
    {\it Case $2$:} If $m=2p+1$ is odd, then  $\frac{m}{2}=p+\frac{1}{2}$. To keep $\Delta$  as an integer then $u=\frac{1}{2}$ (it means that $r$ is even). Hence $\Delta=k-p-1.$   From Cases $1$ and $2$, we can write
    $$\Delta=k-\left\lfloor  \frac{m+1}{2}\right\rfloor.$$
 In the other hand, by \eqref{eq} we have 
 $$\dfrac{k}{2t+1}=\dfrac{m}{2}+\dfrac{r+1}{2(2t+1)}.$$
 Set $q=\frac{r+1}{2(2t+1)},$ as $0 \leqslant r \leqslant  2t$ we get $q\in [\frac{1}{2(2t+1)}, \frac{1}{2}]$. Therefore, 
 $$\dfrac{k}{2t+1}=\dfrac{m}{2}+q, \text{ with  }q\in \big(0, \frac{1}{2}\big].$$
 Next, we consider 
 $$\dfrac{k+t}{2t+1}=\dfrac{k}{2t+1}+\dfrac{t}{2t+1}=\dfrac{m}{2}+q+\dfrac{t}{2t+1}.$$
 We set again $\tau=q+\dfrac{t}{2t+1}$. As $q\leqslant \frac{1}{2}$, so $\tau< \frac{1}{2}+\frac{1}{2}=1.$ Moreover, because of  $q\geqslant \frac{1}{2(2t+1)}$, so $\tau \geqslant \frac{1}{2}$. Hence, $\tau\in [\frac{1}{2}, 1).$
 Therefore, 
  \begin{align}\label{eq2}
 \dfrac{k+t}{2t+1}=\dfrac{m}{2}+\tau, \text{ with  }\tau \in  \big[\dfrac{1}{2}, 1\big).
 \end{align}
 By \eqref{eq2}, we can compute the number $\left\lfloor  \dfrac{k+t}{2t+1}\right\rfloor$ in terms of the following two cases:
 
  {\it Case $(i)$:} If $m=2p$ is even,  we imply 
  $$\left\lfloor  \dfrac{k+t}{2t+1}\right\rfloor=\left\lfloor p+\tau\right\rfloor=p=\left\lceil \dfrac{m}{2}\right\rceil.$$
 
    {\it Case $(ii)$:} If $m=2p+1$ is odd,  we have 
  $$\left\lfloor  \dfrac{k+t}{2t+1}\right\rfloor=\left\lfloor p+\dfrac{1}{2}+\tau\right\rfloor=p+1, \text { by } \tau\in [\frac{1}{2}, 1).$$
  Therefore, $\left\lfloor  \dfrac{k+t}{2t+1}\right\rfloor=p+1=\left\lceil \dfrac{m}{2}\right\rceil.$ By Lemma  \ref{phannguyen}, we have $\left\lceil \dfrac{m}{2}\right\rceil=\left\lfloor  \dfrac{m+1}{2}\right\rfloor$. Thus, it follows:
   \begin{align}\label{eq3}
   \left\lfloor  \frac{m+1}{2}\right\rfloor=\left\lfloor  \dfrac{k+t}{2t+1}\right\rfloor.
   \end{align}
    
 By this, we deduce
   \begin{align}\label{findM}
 \Delta=k-\left\lfloor  \dfrac{k+t}{2t+1}\right\rfloor.
 \end{align}
 Combining \eqref{findmax} and \eqref{findM}, for $1\leqslant t\leqslant k -1$ we have 
 $$\depth R/J(P_{2k})^t=2k-\bigg(k-\left\lfloor  \dfrac{k+t}{2t+1}\right\rfloor \bigg)-1=k-1+\left\lfloor  \dfrac{k+t}{2t+1}\right\rfloor.$$
 
    For $t\geqslant k$, by Lemma \ref{orderedmat}  and Lemma \ref{pathsdstability} with $n=2k$, it  follows $$\dst(J(P_{2k}))=\frac{n}{2}=k.$$  Moreover, by Lemma \ref{hktt}$(ii)$, for all $t\geqslant k$, we get $$\depth R/J(P_{2k})^{t}=2k-\nu_0(P_{2k})-1=2k-\left\lfloor  \frac{2k}{2}\right\rfloor-1=2k-k-1=k-1.$$
    The conclusion of the theorem follows  
    \end{proof}
    In the last of section, we study the formulae to calculate the depth of paths with an odd number of vertices. In this case, we also have to divide the number of vertices into two separate cases.
     \begin{thm}
Let $P_n$  be an odd path with cover ideal $J(P_n)$.
\begin{itemize}
\item[(i)] If  $n=4k+3$ with $k\geqslant 0$, then
$$\depth R/J(P_{4k+3})^t =
 \begin{cases}
(2k+1)+ \left\lfloor\dfrac{2k+1}{2t+1}\right\rfloor& \text{ for all }  0\leqslant t\leqslant k,\\
 2k+1  & \text{ for all } t\geqslant  k+1.
 \end{cases}$$
  \item[(ii)]  If  $n=4k+1$ with $k\geqslant 1$, then
 \begin{equation*}
\depth R/J(P_{4k+1})^t=
 \begin{cases}
2k+ \left\lfloor \dfrac{2k}{2t+1}\right\rfloor &\text{  for all  }1\leqslant t\leqslant k -1,\\
2k  &\text{  for all  }t\geqslant  k.
 \end{cases}
 \end{equation*}
 \end{itemize}
   \end{thm}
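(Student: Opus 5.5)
Following the scheme used for Theorem \ref{main1}, I would combine \eqref{depthreg3} with Lemma \ref{Hahuytai} to write
$$\depth R/J(P_n)^t = n-1-\Delta_n(t),\qquad \Delta_n(t)=\max_S \nu'(G_S),$$
where $S=\{e_i\mid a_i+a_{i+1}\leqslant t-1\}$ ranges over the edge sets coming from monomials $f\notin J(P_n)^t$. The stable range is handled first. Lemma \ref{orderedmat} gives $\nu_0(P_n)=\lfloor n/2\rfloor$, which equals $2k+1$ for $n=4k+3$ and $2k$ for $n=4k+1$. Lemma \ref{pathsdstability} gives $\dst=\lceil (n-1)/4\rceil$, which is $k+1$ and $k$ respectively. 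Then Lemma \ref{hktt}(ii), together with the equality of symbolic and ordinary powers for bipartite graphs, yields depth $n-\lfloor n/2\rfloor-1$. This equals $2k+1$ and $2k$ respectively for all $t\geqslant \dst$. The case $t=0$ in (i) is vacuous or trivial, so I would restrict to $t\geqslant 1$.

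For the non-stable range I would compute $\Delta_n(t)$ with a lower and an upper bound.

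\emph{Lower bound (construction).} Use the same block pattern as in Theorem \ref{main1}, but now on the $n-1$ edges of $P_n$. A block consists of $2t+1$ consecutive edges carrying $t$ pairwise disjoint edges of $S$. The exponents are $0,\le t-1,\ge1,\le t-2,\ge 2,\dots$, and the block ends at a vertex of exponent $t$. Writing $n-1=m(2t+1)+r$ with $0\leqslant r\leqslant 2t$, the remainder contributes $\lfloor (r+1)/2\rfloor$ further disjoint edges. This gives $\Delta\geqslant mt+\lfloor (r+1)/2\rfloor$. A set of disjoint edges of $S$ that are separated by non-$S$ edges is an induced matching of the forest $G_S$, so the bound is justified.

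\emph{Upper bound.} This is the step I expect to be the main obstacle; the paper's even case argues it only heuristically. I would prove that any run of edges $e_i,e_{i+2},\dots,e_{i+2(s-1)}$ in an induced matching of $G_S$ with $s\geqslant t+1$ forces a "gap". Consider the non-$S$ edges $e_{i+1},e_{i+3},\dots$ between consecutive matching edges. Each of them satisfies $a_j+a_{j+1}\geqslant t$, while each $S$-edge satisfies $a+a'\leqslant t-1$. Telescoping these inequalities shows that the exponent at the right endpoint of the $j$-th matching edge is at most $t-j$, which is negative when $j=t+1$. Hence between any $t+1$ matching edges with minimal spacing, some spacing must be at least $3$. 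Counting edges then gives
$$\nu'(G_S)\leqslant mt+\lfloor (r+1)/2\rfloor.$$
A cleaner way to organise this is to let $d_j\geqslant 2$ be the spacings and show that every maximal run of $d_j=2$ has length at most $t-1$. Note also the additional constraint that $f\notin J(P_n)^t$, i.e. $S\neq\emptyset$.

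\emph{Arithmetic.} For $n=4k+3$ we have $n-1=4k+2$; for $n=4k+1$ we have $n-1=4k$. Substituting $2N=n-1$ gives $N=mt+(m+r)/2$. The same parity analysis as in the proof of Theorem \ref{main1} then gives
$$\Delta = N-\Big\lceil \tfrac{m}{2}\Big\rceil + \varepsilon,$$
where the correction $\varepsilon$ depends on the parity of $r$. I would redo the computation of $\lfloor\cdot\rfloor$ via Lemma \ref{phannguyen} to check that $n-1-\Delta$ equals $2k+1+\lfloor (2k+1)/(2t+1)\rfloor$ in case (i) and $2k+\lfloor 2k/(2t+1)\rfloor$ in case (ii). This amounts to the identity
$$\lceil m/2\rceil\ (\text{adjusted by } r)=\Big\lfloor \tfrac{N}{2t+1}\Big\rfloor,$$
with $N=2k+1$ or $2k$. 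I would verify this by writing $N/(2t+1)=m/2+(r)/(2(2t+1))$ and splitting on the parities of $m$ and $r$.

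Finally, I would check the boundary values $t=k$ in (i) and $t=k-1$ in (ii) against the stable values, to confirm consistency with Lemma \ref{hktt}(i).
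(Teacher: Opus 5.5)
Your proposal is correct and follows the paper's overall route. Both proofs reduce, via \eqref{depthreg3} and Lemma \ref{Hahuytai}, to $\Delta=\max_S\nu'(G_S)$ and realize $\Delta\geqslant mt+\lfloor (r+1)/2\rfloor$ with the $(2t+1)$-edge blocks. Both treat the stable range with Lemmas \ref{orderedmat}, \ref{pathsdstability} and \ref{hktt}, and finish with parity arithmetic giving $A+\lfloor A/(2t+1)\rfloor$, $A=(n-1)/2$.

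The substantive difference is the upper bound. The paper only exhibits the block configuration and asserts that it is optimal. Your telescoping argument actually proves it. Along a run of matching edges at spacing $2$, the intermediate edges must lie outside $S$, so the right-endpoint exponents drop by at least $1$ per step and the run has at most $t$ edges. Your ``counting edges'' step then amounts to $2\nu'(G_S)+\lceil \nu'(G_S)/t\rceil\leqslant n+1$, which yields $\Delta\leqslant mt+\lfloor (r+1)/2\rfloor$. So your version closes a step the paper leaves heuristic.

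In the arithmetic you can drop the vague correction term $\varepsilon$. Since $m\equiv r\pmod 2$, one gets $\Delta=A-\lfloor m/2\rfloor$ directly. Your identity $A/(2t+1)=m/2+r/(2(2t+1))$ with $r\leqslant 2t$ then gives $\lfloor m/2\rfloor=\lfloor A/(2t+1)\rfloor$, which is exactly what the paper obtains from Lemma \ref{phannguyen}(ii).

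Finally, both of your bounds hold for every $t\geqslant 1$. Hence $A+\lfloor A/(2t+1)\rfloor$ already gives the stable value $A$ once $2t+1>A$, and the appeal to the stability lemmas is optional.
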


 \begin{proof}  Unlike the proof of Theorem \ref{main1}, in this theorem, we first investigate the depth of $J(P_n)$ when $n$ is odd in the case where stability occurs.  Indeed, by Lemma \ref{pathsdstability}, we have:
 \begin{itemize}
 \item[•]  If $n=4k+3$, then $\dst(J(P_{4k+3}))= \left\lceil \dfrac{4k+3-1}{4}\right\rceil=k+1$, and by Lemma \ref{hktt}$(ii)$ for all $t\geqslant k+1$, we have
 \begin{align*}
 \depth R/J(P_{4k+3})^{t}&=(4k+3)-\nu_0(P_{4k+3})-1=(4k+3)-\left\lfloor  \dfrac{4k+3}{2}\right\rfloor-1\\
 &=4k+3-(2k+1)-1=2k+1.
 \end{align*}

 \item[•]  If $n=4k+1$, then $\dst(J(P_{4k+1}))= \left\lceil \dfrac{4k+1-1}{4}\right\rceil=k$, and by Lemma \ref{hktt}$(ii)$ for all $t\geqslant k$, we have
 \begin{align*}
 \depth R/J(P_{4k+1})^{t}&=(4k+1)-\nu_0(P_{3k+3})-1=(4k+1)-\left\lfloor  \dfrac{4k+1}{2}\right\rfloor-1\\
 &=4k+1-2k-1=2k.
 \end{align*}
 \end{itemize}
 Next, we will demonstrate the case the depth function of $J(P_n)$  is non-increasing. By \eqref{depthreg3} we have 
   $$\depth R/J(P_{n})^t=\underset{S}\min \{\depth R/J_S\}=n-\underset{S}\max\reg I(G_S),$$
   Set $\Delta=\underset{S} \max \ \nu'(G_S)$. The problem reduces to finding the maximum possible value of $M$ given that there exists a sequence of numbers $a_1, \ldots, a_n$ satisfying the constraint from $S=\big\{e_i \in E(P_{n})\mid a_i+a_{i+1}\leqslant t-1\big\}.$ The method for choosing the optimal S is similar to the proof of Theorem \ref{main1}, which is: we choose consecutive blocks of $2t+1$ edges of $P_{n}$, where $t$ edges belong to $S$, interspersed with $t-1$ edges not belonging to $S$, and the last two edges of the block do not belong to $S$, specifically as follows:
     \begin{align*}
   e_1=\{x_1, x_2\}\in S &\text{ with } a_1=0,  \text{ and } a_2\leqslant t-1;\\
   e_2=\{x_2, x_3\}\notin S &\text{ with } a_2\leqslant t-1, \text{ and }  a_3\geqslant 1;\\
   e_3=\{x_3, x_4\}\in S &\text{ with } a_3\geqslant 1, \text{ and }  a_4\leqslant t-2;\\
    e_4=\{x_4, x_5\}\notin S &\text{ with } \ a_4\leqslant t-2, \text{ and }  a_5 \geqslant 2;\\
    \ldots  \ldots  \ldots  \ldots  \ldots & \ldots  \ldots  \ldots  \ldots  \ldots  \ldots  \ldots  \ldots  \ldots  \ldots 
   \end{align*}
   Let $N=n-1$ be the number of edges of $P_n$. Then, divide $N$ into
   \begin{itemize}
   \item[•] The complete blocks number $m=\left\lfloor  \dfrac{N}{2t+1}\right\rfloor;$
   \item[•] The remaimder  edges $r=N-m(2t+1), \text{ in which } 0\leqslant r\leqslant 2t.$
   \end{itemize}
   Then, from the selected configuration, we have 
  \begin{align}\label{findMM}
  \Delta=\underset{S} \max \ \nu'(G_S)=mt+\left\lfloor  \dfrac{r+1}{2}\right\rfloor.
  \end{align}
     As $n$ is odd, it follows $N$ is even. Let $N=2A$, with $A=\dfrac{n-1}{2}$ is an integer. By \eqref{findmax}, we have 
   $$\depth R/J(P_{n})^t=n-\Delta-1=(N+1)-M-1=N-\Delta=2A-\Delta. $$
   By \eqref{findMM}, it follows
   $$\depth R/J(P_{n})^t=2A-\Delta=2A-mt-\left\lfloor  \dfrac{r+1}{2}\right\rfloor .$$
   Since $r=N-m(2t+1)=2A-m(2t+1)$, one deduces $mt=A-\dfrac{m+r}{2}.$ Therefore, 
   $$\depth R/J(P_{n})^t=A+\dfrac{m+r}{2}-\left\lfloor  \dfrac{r+1}{2}\right\rfloor=A+\dfrac{m}{2}+\bigg(\dfrac{r}{2}-\left\lfloor  \dfrac{r+1}{2}\right\rfloor\bigg).$$
    Set $v=\frac{r}{2}-\left\lfloor  \frac{r+1}{2}\right\rfloor$, we have $v=0$ if $r$ is even, and $v=\dfrac{-1}{2}$ if $r$ is odd. Then 
    $$\depth R/J(P_{n})^t=A+ \frac{m}{2}-v.$$
        We investigate  the following two cases:
   
 {\it Case $1$:} If $m=2p$ is even,  it follows  $r$  has to even. Then $v=\dfrac{-1}{2}$ and thus $\depth R/J(P_{n})^t=A+p+\dfrac{1}{2}=A+\left\lfloor  \frac{m}{2}\right\rfloor.$
 
    {\it Case $2$:} If $m=2p+1$ is odd, then  $\frac{m}{2}=p+\frac{1}{2}$. it follows  $r$  has to odd. Then $v=0$ and thus $\depth R/J(P_{n})^t=A+p=A+\left\lfloor  \frac{m}{2}\right\rfloor.$
   From Cases $1$ and $2$, we can write
   \begin{align}\label{depthodd}
   \depth R/J(P_{n})^t=A+\left\lfloor  \frac{m}{2}\right\rfloor.
   \end{align}
       Because of $m=\left\lfloor  \dfrac{N}{2t+1}\right\rfloor=\left\lfloor  \dfrac{2A}{2t+1}\right\rfloor$, and by Lemma \ref{phannguyen}$(ii)$, we get 
   \begin{align}\label{depthodd-even}
 \left\lfloor  \frac{m}{2}\right\rfloor=\left\lfloor  \frac{\left\lfloor  \dfrac{2A}{2t+1}\right\rfloor}{2}\right\rfloor=\left\lfloor  \dfrac{A}{2t+1}\right\rfloor.
    \end{align}
   Put \eqref{depthodd-even} in \eqref{depthodd}, we have the general formula for all odd paths, as follows:
    \begin{align}\label{aaaa}
   \depth R/J(P_{n})^t=A+\left\lfloor  \dfrac{A}{2t+1}\right\rfloor.
   \end{align}
   Finally, we demonstrate the formula \eqref{aaaa} for each $n=4k+3$, and $n=4k+1.$
   
   {\it Case $(i)$} If $n=4k+3$, $A=\dfrac{n-1}{2}=\dfrac{4k+3-1}{2}=2k+1$. As $0\leqslant t \leqslant k$, then $1\leqslant 2t+1 \leqslant 2k+1$. Thus $\left\lfloor  \dfrac{2k+1}{2t+1}\right\rfloor\geqslant 1,$ and we have
   \begin{align*}
   \depth R/J(P_{n})^t=(2k+1)+\left\lfloor  \dfrac{2k+1}{2t+1}\right\rfloor.
   \end{align*}
   
    {\it Case $(ii)$} If $n=4k+1$, $A=\dfrac{n-1}{2}=\dfrac{4k+1-1}{2}=2k$. As $1\leqslant t \leqslant k-1$, then $3\leqslant 2t+1 \leqslant 2k-1$. Thus 
    $$\dfrac{2k}{2t+1}\geqslant \dfrac{2k}{2k-1}>1.$$ Therefore, $\left\lfloor  \dfrac{2k}{2t+1}\right\rfloor\geqslant 1,$ and  we have
   \begin{align*}
   \depth R/J(P_{n})^t=2k+\left\lfloor  \dfrac{2k}{2t+1}\right\rfloor.
   \end{align*}
   Hence the theorem holds.
      \end{proof}

  \section{Acknowledgment} 
This research was funded by the TNU-University of Sciences for research group: NNC.ĐHKH.2025.03.

\vspace{1cm}
\noindent {\bf Data Availability} Data sharing is not applicable to this article as no datasets were generated or analyzed during the current study.

\noindent {\bf Conflict of interest} There are no competing interests of either financial or personal nature.

\end{document}